\numberwithin{equation}{section}
\renewcommand{\subsection}{\@startsection
{subsection}{2}{0mm}{\baselineskip}{-0.25cm}
{\normalfont\normalsize\em}}
\newtheorem{theorem}{Theorem}[section]
\newtheorem{proposition}[theorem]{Proposition}
\theoremstyle{definition}
\newtheorem{example}[theorem]{Example}
\newtheorem{Ex2.1}[theorem]{Example 2.5 revisited}
\newtheorem{Ex2.2}[theorem]{Example 2.6 revisited}
\theoremstyle{remark}
\newtheorem{remark}[theorem]{Remark}
\def\N{\mathbb N}
\def\S{\mathcal S}
\def\1{\mathbf 1}
\def\x{\mathbf x}
\def\cS{\mathcal S}
\def\N {\mathbb{N}}
\def\g {\gamma}
\title[Counting numerical semigroups by genus and even gaps via Kunz-coordinate vectors]{Counting numerical semigroups by genus and even gaps via Kunz-coordinate vectors}
\author[M. Bernardini]{Matheus Bernardini}
\address{}
\email{matheusbernardini@unb.br}
\thanks{{\em 2010 Math. Subj. Class.}: Primary 20M14; 
Secondary 05A15, 05A19}
\thanks{{\em Keywords}: numerical semigroup, multiplicity, even gap, genus, Ap\'ery set, Kunz-coordinate vector}
\begin{document}


\begin{abstract}
We contruct a one-to-one correspondence between a subset of numerical semigroups with genus $g$ and $\gamma$ even gaps and the integer points of a rational polytope. In particular, we give an overview to apply this correspondence to try to decide if the sequence $(n_g)$ is increasing, where $n_g$ denotes the number of numerical semigroups with genus $g$.
\end{abstract}

\maketitle

\section{Introduction}\label{s1}

A {\em numerical semigroup} $S$ is a subset of $\mathbb{N}_0$ such that $0 \in S$, it is closed under addition and the set $G(S):=\mathbb{N}_0\setminus S$, the set of {\em gaps} of $S$, is finite.  The number of elements $g=g(S)$ of $G(S)$ is called the {\em genus} of $S$ and the first non-zero element in $S$ is called the {\em multiplicity} of $S$. If $S$ is a numerical semigroup with genus $g$ then one can ensure that all gaps of $S$ belongs to $[1,2g]$; in particular, $\{2g+i:i\in\N_0\} \subseteq S$ and the number of numerical semigroups with genus $g$, denoted by $n_g$, is finite. Some excellent references for the background on numerical semigroups are the books \cite{GS-R} and \cite{RA}.

Troughtout this paper, we keep the notation proposed by Bernardini and Torres \cite{MF}: the set of numerical semigroups with genus $g$ is denoted by $\cS_g$ and has $n_g$ elements and the the set of numerical semigroups with genus $g$ and $\gamma$ even gaps is denoted by $\cS_\gamma(g)$ and has $N_\gamma(g)$ elements.

In this paper we use the quite useful parametrization
\begin{equation}
\x_g: \cS_\gamma(g)\to \cS_\gamma, S\mapsto S/2,
\label{param}
\end{equation}
where $S/2 := \{s \in \mathbb{N}_0: 2s \in S\}$.

Naturally, the set $\S_\gamma(g)$ and the map $\x_g$ can be generalized. Let $d > 1$ be an integer. The set of numerical semigroups with genus $g$ and $\gamma$ gaps which are congruent to 0 modulo $d$ is denoted by $\cS_{(d,\g)}(g)$. There is a natural parametrization given by
$$\x_{g_d}: \cS_{(d,\gamma)}(g)\to \cS_\gamma, S\mapsto S/d,$$
where $S/d := \{s \in \mathbb{N}_0: ds \in S\}$. This concept appears in \cite{R-GS-GG-B}, for instance.

In this paper, we obtain a one-to-one correspondence between the set $\x_g^{-1}(T)$ and the integer points of a rational polytope.

As an application of this correspondence, we give a new approach to compute the numbers $N_\gamma(g)$. Our main goal is finding a new direction to discuss the following question.

\begin{equation}
\text{Is it true that } n_g \leq n_{g+1}, \text{ for all } g?
\label{weak}
\end{equation}

The first few elements of the sequence $(n_g)$ are $1,1,2,4,7,12,23,39,67$. Kaplan \cite{Kaplan2} wrote a nice survey on this problem and one can find information of these numbers in Sloane's On-line Encyclopedia of Integer Sequences \cite{Sloane}.

Bras-Amor\'os \cite{Amoros1} conjectured remarkable properties on the
behaviour of the sequence $(n_g)$:

\begin{enumerate}
\item $\lim_{g\to\infty}\frac{n_{g+1}+n_g}{n_{g+2}}=1$;
\item $\lim_{g\to\infty}\frac{n_{g+1}}{n_g}=\varphi:= 
\frac{1+\sqrt{5}}{2}$;
\item $n_{g+2}\geq n_{g+1}+n_g$ for any $g$.
\end{enumerate}

Zhai \cite{Zhai} proved that $\lim_{g\to\infty}n_g\varphi^{-g}$ is a constant. As a consequence, it confirms that items (1) and (2) hold true. However, item (3) is still an open problem; even a weaker version, proposed at (\ref{weak}), is an open question. Zhai's result also ensures that $n_g < n_{g+1}$ for large enough $g$. Fromentin and Hivert \cite{FH} verified that $n_g < n_{g+1}$ also holds true for $g \leq 67$. 

Torres \cite{Torres} proved that $\cS_{\g}(g) \neq \emptyset$ if, and only if, $2g \geq 3\g$. Hence,

\begin{equation}
n_g=\sum_{\gamma=0}^{\lfloor 2g/3\rfloor}N_\gamma(g)\, .
\label{ng}
\end{equation}

In order to work on Question (\ref{weak}), Bernardini and Torres \cite{MF} tried to understand the effect of the even gaps on a numerical semigroup. By using the so-called $t$-translation, they proved that $N_\g(g) = N_\g(3\g)$ for $g \geq 3\g$ and also $N_\g(g) < N_\g(3\g)$ for $g < 3\g$. Although numerical evidence points out that $N_\g(g) \leq N_\g(g+1)$ holds true for all $g$ and $\g$, their methods could not compare numbers $N_{\gamma}(g_1)$ and $N_{\gamma}(g_2)$, with $3\gamma/2 \leq g_1 < g_2 < 3\gamma$. Notice that if $N_{\gamma}(g_1) \leq N_{\gamma}(g_2)$, for $3\gamma/2 \leq g_1 < g_2 < 3\gamma$ then $n_g < n_{g+1}$, for all $g$.

\section{Ap\'ery set and Kunz-coordinate vector}

Let $S$ be a numerical semigroup and $n \in S$. The Ap\'ery set of $S$ (with respect to $n$) is the set $Ap(S,n) = \{s \in S: s-n \notin S\}$. If $n = 1$, then $S = \N_0$ and $Ap(\N_0, 1) = \{0\}$. If $n > 1$, then a there are $w_1, \ldots, w_{n-1} \in \N$ such that $Ap(S,n) = \{0, w_1, \ldots, w_{n-1}\}$, where $w_i = \min\{s \in S: s \equiv i \pmod n\}$.

\begin{proposition}
Let $S$ be a numerical semigroup with multiplicity $m$ and $Ap(S,m) = \{0, w_1, \ldots, w_{m-1}\}$. Then
$$S = \langle m, w_1, w_2, \ldots, w_{m-1}\rangle.$$
\end{proposition}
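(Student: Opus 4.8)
The plan is to show the two inclusions $\langle m, w_1,\dots,w_{m-1}\rangle \subseteq S$ and $S \subseteq \langle m, w_1,\dots,w_{m-1}\rangle$. The first inclusion is immediate: $m \in S$ by hypothesis (it is the multiplicity), and each $w_i$ lies in $S$ by the very definition of the Ap\'ery set, so the semigroup they generate is contained in $S$ because $S$ is closed under addition and contains $0$.

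For the reverse inclusion, I would take an arbitrary $s \in S$ and argue by strong induction on $s$ (or, equivalently, take a minimal counterexample). If $s = 0$ the claim is trivial. If $s > 0$, consider the residue $i := s \bmod m$. If $i = 0$, then $s$ is a positive multiple of $m$, hence $s = (s/m)\cdot m \in \langle m, w_1,\dots,w_{m-1}\rangle$. If $i \neq 0$, then by definition $w_i = \min\{t \in S : t \equiv i \pmod m\}$, and since $s \equiv i \pmod m$ and $s \in S$, we get $w_i \le s$. Now the key step: I claim $s - w_i \in S$. Granting this, $s - w_i$ is a nonnegative element of $S$ strictly smaller than $s$ (strictly, because $w_i > 0$), so by the induction hypothesis $s - w_i \in \langle m, w_1,\dots,w_{m-1}\rangle$, and then $s = (s - w_i) + w_i$ also lies in that generated semigroup.

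The main obstacle — really the only substantive point — is justifying that $s - w_i \in S$. This follows from the defining property of the Ap\'ery set: since $s \equiv w_i \pmod m$, the difference $s - w_i$ is a multiple of $m$, say $s - w_i = km$ with $k \ge 0$; one shows by a short induction on $k$ that $w_i + km \in S$ implies $w_i + (k-1)m \in S$, using that $w_i + jm \notin Ap(S,m)$ for $j \ge 1$ (as $w_i$ is the \emph{smallest} element of $S$ in its residue class, so $w_i + jm - m = w_i + (j-1)m$ is forced to be in $S$ once $w_i + jm$ is). Equivalently, one can phrase this as: every element of $S$ in the class of $i$ has the form $w_i + km$ for some $k \ge 0$, which is exactly the statement that $w_i$ generates that residue class over $\langle m \rangle$ inside $S$. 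Once this lemma-like fact is in hand, the induction closes and the proposition follows; I would expect the write-up to handle this either by the minimal-counterexample phrasing or by invoking it as a standard property of Ap\'ery sets.
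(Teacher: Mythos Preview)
Your argument is correct and rests on the same key observation as the paper's proof: for $s \in S$ with residue $i \neq 0$, the minimality of $w_i$ gives $w_i \le s$ and $s - w_i = km$ for some $k \ge 0$. However, you wrap this in unnecessary machinery. Once you have $s - w_i = km$ with $k \ge 0$, you are already done: $s = w_i + km$ is visibly an element of $\langle m, w_1, \dots, w_{m-1}\rangle$, so neither the strong induction on $s$ nor the auxiliary claim ``$s - w_i \in S$'' (with its own descending induction on $k$) is needed. The paper's proof does exactly this in one line: by minimality of $w_i$ there exists $\tilde{k} \in \mathbb{N}_0$ with $s = w_i + \tilde{k}m$, hence $s \in \langle m, w_1, \dots, w_{m-1}\rangle$. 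Your ``main obstacle'' dissolves the moment you notice that a nonnegative multiple of $m$ already lies in $\langle m \rangle$.
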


\begin{proof}
It is clear that $am \in \langle m, w_1, w_2, \ldots, w_{m-1}\rangle, \forall a \in \mathbb{N}.$ For $s \in S$, $m \nmid s$, there is $\tilde{k} \in \N_0$ such that $s = w_i + \tilde{k}m \in \langle m, w_1, w_2, \ldots, w_{m-1}\rangle$. On the other hand, $m, w_1, \ldots, w_{m-1} \in S$.
\end{proof}

Let $S$ be a numerical semigroup, $n \in S$ and consider $Ap(S,n) = \{0, w_1, \ldots, w_{n-1}\}$. There are $e_1, \ldots, e_{n-1} \in \N$ such that $w_i = ne_i + i$, for each $i \in \{1, \ldots, n-1\}$. The vector $(e_1, \ldots, e_{n-1}) \in \N_0^{n-1}$ is called the \textit{Kunz-coordinate vector} of $S$ (with respect to $n$). In particular, if $m$ is the multiplicity of $S$, then the Kunz-coordinate vector of $S$ (with respect to $m$) is in $\N^{m-1}$. This concept appears in \cite{BP}, for instance.

A natural task is finding conditions for a vector $(x_1, \ldots, x_{m-1}) \in \N^{m-1}$ to be a Kunz-coordinate vector (with respect to the multiplicity $m$ of $S$) of some numerical semigroup $S$ with multiplicity $m$. The following examples illustrate the general method, which is presented in Proposition \ref{ineqgenus}.

\begin{example}
Numerical semigroups with multiplicity $2$ are $\langle 2, 2e_1+1 \rangle$, where $e_1 \in \N$.

There is a one-to-one correspondence between the set of numerical semigroups with multiplicity $2$ and the set of positive integers given by $\langle 2, 2e_1+1 \rangle \mapsto e_1$.
\end{example}

\begin{example}
Let $S = \langle 3, 3e_1+1, 3e_2+2 \rangle$ be a numerical semigroup with multiplicity $3$ and genus $g$, where $e_1, e_2 \in \N$. By minimality of $w_1 = 3e_1 + 1$ and $w_2 = 3e_2 + 2$, $(e_1,e_2)$ satisfies
$$
\begin{cases}
(3e_1 + 1) + (3e_1 + 1) \geq 3e_2 + 2 \\
(3e_2 + 2) + (3e_2 + 2) \geq 3e_1 + 1.
\end{cases}
$$
The set of gaps of $S$ has $e_1+e_2$ elements, since $G(S) = \{3n_1+1: 0 \leq n_1 < e_1 \} \cup \{3n_2+2: 0 \leq n_2 < e_2\}$. Thus, $e_1 + e_2 = g$. On the other hand, if $(e_1,e_2) \in \N^2$ is such that $2e_1 \geq e_2$, $2e_2 + 1 \geq e_1$ and $e_1+e_2 = g$, then $\langle 3, 3e_1+1, 3e_2+2 \rangle$ is a numerical semigroup with multiplicity $m$ and genus $g$.

Hence, there is a one-to-one correspondence between the set of numerical semigroups with multiplicity $3$ and the vectors of $\N^2$ which are solutions of

$$
\begin{cases}
2X_1 \geq X_2 \\
2X_2 +1 \geq X_1 \\
X_1 + X_2 = g.
\end{cases}
$$
\label{m=3}
\end{example}

In order to give a characterization of numerical semigroups with fixed multiplicity and fixed genus, the main idea is generalizing Example \ref{m=3}. The following is a result due to Rosales et al. \cite{R-GS-GG-B}.

\begin{proposition}
There is a one-to-one correspondence between the set of numerical semigroups with multiplicity $m$ and genus $g$ and the positive integer solutions of the system of inequalities

$$
\begin{cases}
X_i + X_j \geq  X_{i+j}, \hspace{1.2cm} \text{ for } 1 \leq i \leq j \leq m-1; i + j < m; \\
X_i + X_j + 1 \geq X_{i+j-m}, \text{ for } 1 \leq i \leq j \leq m-1; i + j > m \\
\sum_{k=1}^{m-1} X_k =  g.
\end{cases}
$$
\label{ineqgenus}
\end{proposition}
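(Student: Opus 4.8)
The plan is to establish the correspondence in two directions, generalizing the reasoning already carried out for $m=3$ in Example \ref{m=3}. First I would take a numerical semigroup $S$ with multiplicity $m$ and genus $g$, write its Ap\'ery set as $Ap(S,m)=\{0,w_1,\dots,w_{m-1}\}$ with $w_i=me_i+i$ and each $e_i\in\N$ (positivity of the $e_i$ being exactly the statement that $m$ is the multiplicity, since no element of $S$ lies strictly between $0$ and $m$), and show that the Kunz-coordinate vector $(e_1,\dots,e_{m-1})$ satisfies the displayed system. The inequalities come from closure under addition: for indices $i,j$ with $1\le i\le j\le m-1$, the element $w_i+w_j$ lies in $S$ and is congruent to $i+j$ modulo $m$, so it is bounded below by the smallest element of $S$ in that residue class, namely $w_{(i+j)\bmod m}$ (interpreting $w_0=0$ when $i+j\equiv 0$). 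Translating $w_i+w_j\ge w_{i+j}$ into the $e$-coordinates gives $e_i+e_j\ge e_{i+j}$ when $i+j<m$, and $w_i+w_j\ge w_{i+j-m}$ gives $e_i+e_j+1\ge e_{i+j-m}$ when $i+j>m$ (the extra $+1$ coming from the bookkeeping $me_i+i+me_j+j = m(e_i+e_j+1)+(i+j-m)$). The case $i+j=m$ yields $e_i+e_j\ge 0$, which is automatic, so it may be omitted. Finally, the genus condition $\sum_k e_k = g$ follows because the gaps of $S$ in residue class $i$ modulo $m$ are exactly $\{mn+i: 0\le n<e_i\}$, a set of size $e_i$, and every gap lies in some nonzero residue class.

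Conversely, given $(x_1,\dots,x_{m-1})\in\N^{m-1}$ satisfying the system, I would set $w_i := mx_i+i$ and define
$$S := \{0\}\cup\{s\in\N : s\ge w_{s\bmod m}\},$$
equivalently $S=\N_0\setminus\bigcup_{i=1}^{m-1}\{mn+i:0\le n<x_i\}$, and verify that $S$ is a numerical semigroup with multiplicity $m$, genus $g$, and Kunz-coordinate vector exactly $(x_1,\dots,x_{m-1})$. Membership of $0$ and finiteness of the complement are immediate; that $m\in S$ and that $m$ is the multiplicity follows from each $x_i\ge 1$. Closure under addition is where the inequalities are used: if $s,s'\in S$ are nonzero with residues $i=s\bmod m$ and $j=s'\bmod m$, one writes $s=mn+i\ge me_i+i$ wait, $s\ge w_i$ so $s=mn+i$ with $n\ge x_i$, similarly $s'=mn'+j$ with $n'\ge x_j$, and then checks that $s+s'$ has residue $(i+j)\bmod m$ and exceeds the corresponding $w$-value; the two inequality families are precisely what is needed in the cases $i+j<m$ and $i+j>m$, while $i+j=m$ and the cases where $i$ or $j$ is $0$ are trivial. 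The genus count $g(S)=\sum_k x_k$ is then clear from the description of the complement, and reading off the Ap\'ery set shows the Kunz-coordinate vector is recovered, so the two maps are mutually inverse.

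The main obstacle, such as it is, lies in being careful about the boundary and degenerate cases in the closure argument: the residue of a sum of two elements, the role of the residue class $0$ (where $w_0=0$ and the relevant inequality is vacuous), and confirming that the $+1$ in the second family of inequalities is exactly right rather than off by one — these are the points where Example \ref{m=3} is most instructive, since there one sees explicitly that $2e_1\ge e_2$ (from $i=j=1$, $i+j<m$) and $2e_2+1\ge e_1$ (from $i=j=2$, $i+j>m$) with no further constraints. Once the case analysis is organized by whether $i+j$ is less than, equal to, or greater than $m$, each subcase is a one-line verification, so I do not anticipate any genuine difficulty beyond this bookkeeping; the construction of $S$ from a valid vector is the only place where one must check that the defining inequalities genuinely force closure, and this is a direct translation as indicated.
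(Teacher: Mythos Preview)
Your proposal is correct and follows essentially the same approach as the paper, which only sketches the argument by invoking the minimality of the Ap\'ery elements $w_i$, the congruence $w_i+w_j\equiv i+j\pmod m$, and the gap decomposition $G(S)=\bigcup_{i=1}^{m-1}\{mn_i+i:0\le n_i<e_i\}$, and then refers to \cite{R-GS-GG-B} for the details. Your write-up simply fills in those details (including the converse construction), so there is nothing to add beyond cleaning up the stray ``wait'' self-correction in the closure verification.
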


Let $S = \langle m, w_1, \ldots, w_{m-1} \rangle$ be a numerical semigroup with multiplicity $m$ and genus $g$, where $w_i = me_i + i$. The main idea of the proof is using the minimality of $w_1, \ldots, w_{m-1}$ and observing that $w_i + w_j \equiv i+j \pmod m$ and $G(S) = \bigcup_{i=1}^{m-1} \{mn_i + i: 0 \leq n_i < e_i \}$. For a full proof, see \cite{R-GS-GG-B}.

\section{The main result and an application to a counting problem}\label{s3}

In \cite{MF}, the calculation of $N_\gamma(g)$ was given by

\begin{equation}
N_\gamma(g) = \sum_{T \in \mathcal{S}_\gamma}\#\mathbf{x}_g^{-1}(T).
\label{MF_even}
\end{equation}

In this section, we present a new way for computing those numbers. In order to do this, we fix the multiplicity of $T \in \mathcal{S}_\gamma$.

First of all, we obtain a relation between the genus and the multiplicity of a numerical semigroup. 

\begin{proposition}
Let $S$ be a numerical semigroup with genus $g$ and multiplicity $m$. Then $m \leq g+1$.
\label{multiplicity}
\end{proposition}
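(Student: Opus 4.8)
The plan is to bound the multiplicity $m$ directly in terms of the gaps. The key observation is that the multiplicity is the smallest nonzero element of $S$, so every integer in $[1, m-1]$ is a gap of $S$. This immediately gives $m - 1$ gaps lying in $\{1, 2, \ldots, m-1\}$, and since the total number of gaps is exactly $g$, we get $m - 1 \leq g$, i.e.\ $m \leq g+1$.

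Concretely, I would argue as follows. By definition of the multiplicity, $1, 2, \ldots, m-1 \notin S$, so $\{1, 2, \ldots, m-1\} \subseteq G(S)$. Since $G(S)$ has exactly $g$ elements, $m - 1 = \#\{1, \ldots, m-1\} \leq \#G(S) = g$, hence $m \leq g+1$.

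There is essentially no obstacle here; the statement is a one-line consequence of the definitions, and the only thing worth remarking is the extremal case. Equality $m = g+1$ holds precisely when $G(S) = \{1, 2, \ldots, g\}$, that is, when $S = \{0\} \cup \{g+1, g+2, \ldots\} = \langle g+1, g+2, \ldots, 2g+1 \rangle$ is the ordinary numerical semigroup of genus $g$. One could optionally note this, since it shows the bound is sharp and will presumably matter when enumerating over possible multiplicities in the application to computing $N_\gamma(g)$.
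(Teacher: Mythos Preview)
Your proof is correct and is essentially the same as the paper's: both rest on the observation that $\{1,\ldots,m-1\}\subseteq G(S)$, the paper phrasing it as a contradiction from $m\geq g+2$ while you argue directly. Your remark on the extremal case $S=\{0,g+1,g+2,\ldots\}$ is exactly what the paper records in the subsequent Remark.
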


\noindent \textbf{\textit{Proof}}
If a numerical semigroup $S$ has multiplicity $m$ and genus $g$ with $m \geq g+2$, then the number of gaps of $S$ would be, at least, $g+1$ and it is a contradiction. Hence $m \leq g+1$.

\begin{remark}
The bound obtained in Proposition \ref{multiplicity} is sharp, since $\{0, g+1, \ldots \}$ has genus $g$ has multiplicity $g+1$.
\end{remark}

If $\gamma = 0$, then $\mathcal{S}_0 = \{\mathbb{N}_0\}$ and $\mathbf{x}_g^{-1}(\mathbb{N}_0) = \{\langle 2, 2g+1 \rangle\}$. Hence, $N_0(g) = 1$, for all $g$. If $\gamma > 0$, we divide the set $\mathcal{S}_\gamma $ into the subsets $\mathcal{S}_{\gamma}^m := \{S: g(S) = \gamma \text{ and } m(S) = m\}$, where $m \in [2,\gamma+1] \cap \mathbb{Z}$. We can write

\begin{equation}
\mathcal{S}_\gamma = \bigcup_{m=2}^{\gamma+1} \mathcal{S}_{\gamma}^m.
\label{nova}
\end{equation}

Putting (\ref{MF_even}) and (\ref{nova}) together, we obtain

$$N_{\gamma}(g) = \sum_{m = 2}^{\gamma+1}\sum_{T \in \mathcal{S}_{\gamma}^m} \#\mathbf{x}_g^{-1}(T).$$

Thus, it is important to give a characterization for $T \in \mathcal{S}_\gamma^m$. We can describe $T$ by its Ap\'ery set (with respect to its multiplicity $m$) and write 
$$T = \langle m, me_1 + 1, me_2 + 2, \ldots, me_{m-1} + (m-1) \rangle,$$ 
where $me_i +i = \min\{s \in S: s \equiv i \pmod m\}$.

The next result characterizes all numerical semigroups of $\mathbf{x}_g^{-1}(T)$, for $T \in \mathcal{S}_\gamma^m$. It is a consequence of Proposition \ref{ineqgenus}.

\begin{theorem}
Let $T = \langle m, me_1+1, \ldots, me_{m-1}+ (m-1)\rangle \in \mathcal{S}_{\gamma}^m$. A numerical semigroup $S$ belongs to $\mathbf{x}_g^{-1}(T)$ if, and only if, 
$$S = \langle 2m, 2me_1+2, \ldots, 2me_{m-1}+(2m-2), 2mk_1+1, 2mk_3+3, \ldots, 2mk_{2m-1}+(2m-1) \rangle,$$ 
where $(k_1,k_3,\ldots,k_{2m-1}) \in \mathbb{N}_0^{m}$ satisfies the system

\begin{equation*}
\begin{cases}
(*)
\begin{cases}
X_{2i-1} + e_j \geq X_{2(i+j)-1}, \hspace{1.6cm} \text{ for } 1 \leq i \leq m; 1 \leq j \leq m-1; i + j \leq m; \\
X_{2i-1} + e_j + 1 \geq X_{2(i+j-m)-1}, \hspace{0.4cm} \text{ for } 1 \leq i \leq m; 1 \leq j \leq m-1; i + j > m; \\
\end{cases} \\
(**)
\begin{cases}
X_{2i-1} + X_{2j-1} \geq e_{i+j-1}, \hspace{1.2cm} \text{ for } 1 \leq i \leq j \leq m; i + j \leq m; \\
X_{2i-1} + X_{2j-1} + 1 \geq e_{i+j-1-m}, \text{ for } 1 \leq i \leq j \leq m; i + j \geq m+2; \\
\end{cases} \\
\sum_{i=1}^{m} X_{2i-1} = g-\gamma,
\end{cases}
\label{system}
\end{equation*}
\label{oddeven}
\end{theorem}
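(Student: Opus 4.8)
The plan is to use the parametrization $\x_g(S) = S/2$ together with the characterization of numerical semigroups by their Kunz-coordinate vectors from Proposition~\ref{ineqgenus}, applied to $S$ with respect to the multiplicity $2m$. First I would observe that if $S \in \x_g^{-1}(T)$, then $S$ has multiplicity $2m$: indeed $2m \in S$ since $m \in T = S/2$, and no smaller positive element can lie in $S$ because the even elements of $S$ are exactly $2(S/2) = 2T$ (whose smallest positive member is $2m$) while the odd elements of $S$ are gaps of $T$ under the map $\x_g$ only in the sense that they contribute to the even gaps count; more precisely $\gamma = g(S) - g(S/2)$ forces the multiplicity of $S$ to be even and equal to $2m$. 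So I would write $Ap(S, 2m) = \{0, w_1, \dots, w_{2m-1}\}$ and split the residues mod $2m$ into even residues $2i$ and odd residues $2i-1$.

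The key structural step is to identify the Kunz coordinates of $S$ with respect to $2m$. For an even residue $2i$ with $1 \le i \le m-1$, the smallest element of $S$ congruent to $2i$ mod $2m$ is $2$ times the smallest element of $T = S/2$ congruent to $i$ mod $m$, which is $2(me_i + i) = 2m e_i + 2i$; hence the Kunz coordinate in position $2i$ is exactly $e_i$. For an odd residue $2i-1$ with $1 \le i \le m$, the smallest element of $S$ in that class is $2m k_{2i-1} + (2i-1)$ for some $k_{2i-1} \in \N_0$, and these are the free parameters. Then I would invoke Proposition~\ref{ineqgenus} for $S$ with $m$ replaced by $2m$ and $g$ replaced by $g(S) = g$: the inequalities $X_a + X_b \ge X_{a+b}$ (and the shifted version when $a+b > 2m$) must be checked according to the parities of $a$ and $b$. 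When both $a = 2i$ and $b = 2j$ are even, the inequality becomes $e_i + e_j \ge e_{i+j}$, which already holds because $T$ is a numerical semigroup (Proposition~\ref{ineqgenus} applied to $T$), so it imposes no new condition. When $a = 2i-1$ is odd and $b = 2j$ is even, we get the conditions $(*)$; when both $a = 2i-1$ and $b = 2j-1$ are odd, their sum is even, so $X_{2i-1} + X_{2j-1} \ge X_{2(i+j)-2} = e_{i+j-1}$, giving the conditions $(**)$ (with the appropriate index shift and $+1$ when $i+j \ge m+2$). Finally the genus equation $\sum_{k=1}^{2m-1} X_k = g$ splits as $\sum_{i=1}^{m-1} e_i + \sum_{i=1}^m X_{2i-1} = g$; since $\sum_{i=1}^{m-1} e_i = \g$ (the genus of $T$), this is exactly $\sum_{i=1}^m X_{2i-1} = g - \g$.

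For the converse, I would take any $(k_1, k_3, \dots, k_{2m-1}) \in \N_0^m$ satisfying the displayed system, form the vector in $\N_0^{2m-1}$ whose even coordinates are the $e_i$ and whose odd coordinates are the $k_{2i-1}$, and check it satisfies all the inequalities of Proposition~\ref{ineqgenus} with multiplicity $2m$ and genus $g$ — the even-even inequalities come free from $T$ being a semigroup, the odd-even and odd-odd ones are precisely $(*)$ and $(**)$, and the sum condition holds by construction. Proposition~\ref{ineqgenus} then yields a numerical semigroup $S$ with multiplicity $2m$ and genus $g$ whose generators are as displayed; one checks $S/2 = T$ by comparing Ap\'ery sets, so $S \in \x_g^{-1}(T)$. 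The main obstacle I anticipate is purely bookkeeping: carefully tracking the index shifts between the $X$-indexing (which runs over residues $1, \dots, 2m-1$ of $\Z/2m\Z$) and the $e$-indexing (residues $1, \dots, m-1$ of $\Z/m\Z$), and correctly matching the ``$i+j < m$ vs. $i + j > m$'' case split of Proposition~\ref{ineqgenus} at multiplicity $m$ for $T$ against the ``$a + b < 2m$ vs. $a + b > 2m$'' split at multiplicity $2m$ for $S$, so that the $+1$ terms land in the right inequalities and one confirms the even-even case genuinely imposes nothing new beyond what $T \in \S_\gamma^m$ already guarantees.
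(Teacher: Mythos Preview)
Your overall approach is the same as the paper's: identify the Kunz-coordinate vector of $S$ with respect to $2m$, split it by parity of the residue into the even-position coordinates $(e_1,\ldots,e_{m-1})$ and the odd-position coordinates $(k_1,k_3,\ldots,k_{2m-1})$, and then sort the inequalities of Proposition~\ref{ineqgenus} according to the parities of the two indices. The even--even block is automatic because $(e_1,\ldots,e_{m-1})$ is already the Kunz vector of $T$, the odd--even block gives $(*)$, the odd--odd block gives $(**)$, and the genus equation splits as $\gamma + \sum_i k_{2i-1} = g$.

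There is, however, a genuine error in your first step: the claim that $S$ must have multiplicity $2m$ is false, and the paper explicitly points this out in the remark immediately following the theorem (some $k_i$ may vanish, so the multiplicity of $S$ can be an odd number below $2m$). For instance, take $T = \langle 2,3\rangle \in \S_1^2$ and $S = \langle 3,4\rangle$: then $S/2 = T$, $g(S)=3$, $\gamma=1$, but $m(S)=3\neq 4$. Your sentence ``$\gamma = g(S)-g(S/2)$ forces the multiplicity of $S$ to be even'' does not hold up: nothing prevents an odd number smaller than $2m$ from lying in $S$. The correct starting point is simply that $2m \in S$ (because $m \in T = S/2$), so $Ap(S,2m)$ and the Kunz-coordinate vector of $S$ with respect to $2m$ are well-defined, and the Kunz inequalities characterize $S$ with respect to \emph{any} element of $S$, not just its multiplicity; the multiplicity hypothesis in Proposition~\ref{ineqgenus} is only what forces the coordinates to be strictly positive, which is precisely why here the $k_i$ are allowed to lie in $\N_0$ rather than $\N$. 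With this correction in place (and the corresponding adjustment in the converse direction, where Proposition~\ref{ineqgenus} must again be invoked in its $\N_0$-coordinate form relative to $2m$), your argument goes through and coincides with the paper's.
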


\noindent \textbf{\textit{Proof}}
The even numbers $2m, 2me_1 + 2, \ldots, 2me_{m-1} + 2me_{m-1} + 2(m-1)$ belongs to  $Ap(2m,S)$. Let $2mk_1+1, 2mk_3+3, \ldots, 2mk_{2m-1}+(2m-1)$ be the odd numbers of $Ap(2m,S)$. Thus, $(e_1, k_1, e_2, k_3, \ldots, e_{m-1}, k_{2m-1}) \in \mathbb{N}_0^{m-1}$ is the Kunz-coordinate vector of $S$ (with respect to $2m$). 

Now, we apply Proposition \ref{ineqgenus}. Inequalites given in $(*)$ come from sums of an odd element of $Ap(2m,S)$ with an even element of $Ap(2m,S)$, while inequalities given in $(**)$ come from sums of two odd elements of $Ap(2m,S)$. Since $(e_1, \ldots, e_{m-1})$ is the Kunz-coordinate vector of $T$ (with respect to $m$), then the sum of two even elements of $Ap(2m,S)$ belongs to $S$. Finally, last equality comes from the fact that $S$ has $g - \gamma$ odd gaps. 

\begin{remark}
Some of the numbers $k_i$ can be zero. Hence, it is possible that the multiplicity of $S$ is not $2m$.
\end{remark}

\noindent \textbf{Example 3} Let $T = \langle 2, 2\gamma+1 \rangle \in \mathcal{S}_\gamma^2$, with $\gamma \in \mathbb{N}$. Theorem \ref{oddeven} ensures that if $S \in \mathbf{x}_g^{-1}(T)$, then

$$S = \langle 4, 4\gamma+2, 4k_1+1, 4k_3 + 3 \rangle,$$
where $(k_1,k_3) \in \mathbb{N}_0^2$ satisties

$$
(\#)\begin{cases}
(*) \begin{cases}
-\gamma-1 \leq X_3 - X_1 \leq \gamma
\end{cases} \\
(**) \begin{cases}
X_1 + X_1 \geq \gamma \\
X_3 + X_3 + 1 \geq \gamma
\end{cases} \\
X_1 + X_3 = g - \gamma.
\end{cases}
$$

\begin{figure}[h]
\includegraphics[scale=.65]{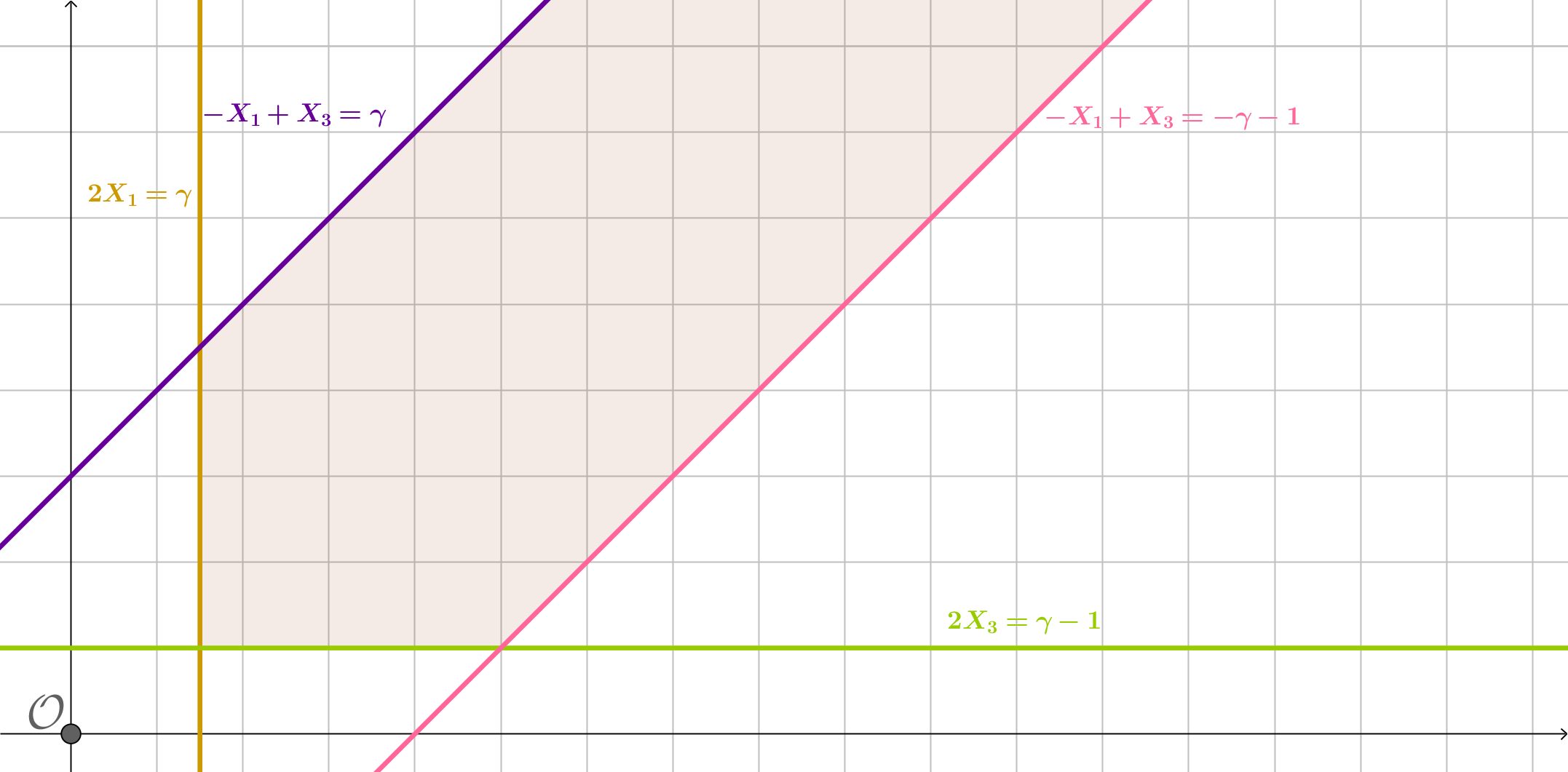}
\caption{Region in $\mathbb{R}^2$ given by inequalities $(*)$ and $(**)$.}
\label{fig:1}
\end{figure}


The set of integer points of this region is in one-to-one correspondence with the set $\{S \in \mathcal{S}_\gamma(g): S/2 \text{ has multiplicity } 2\}$. 

If $g$ is fixed, then the set of points that satisfies the system $(\#)$ is a polytope (a line segment). We are interested in the set of integer points of this polytope. The following figure shows examples for some values of $g$. Each integer point represents a numerical semigroup of the set  $\{S \in \mathcal{S}_\gamma(g): S/2 \text{ has multiplicity } 2\}$.

\begin{figure}[h]
\includegraphics[scale=.65]{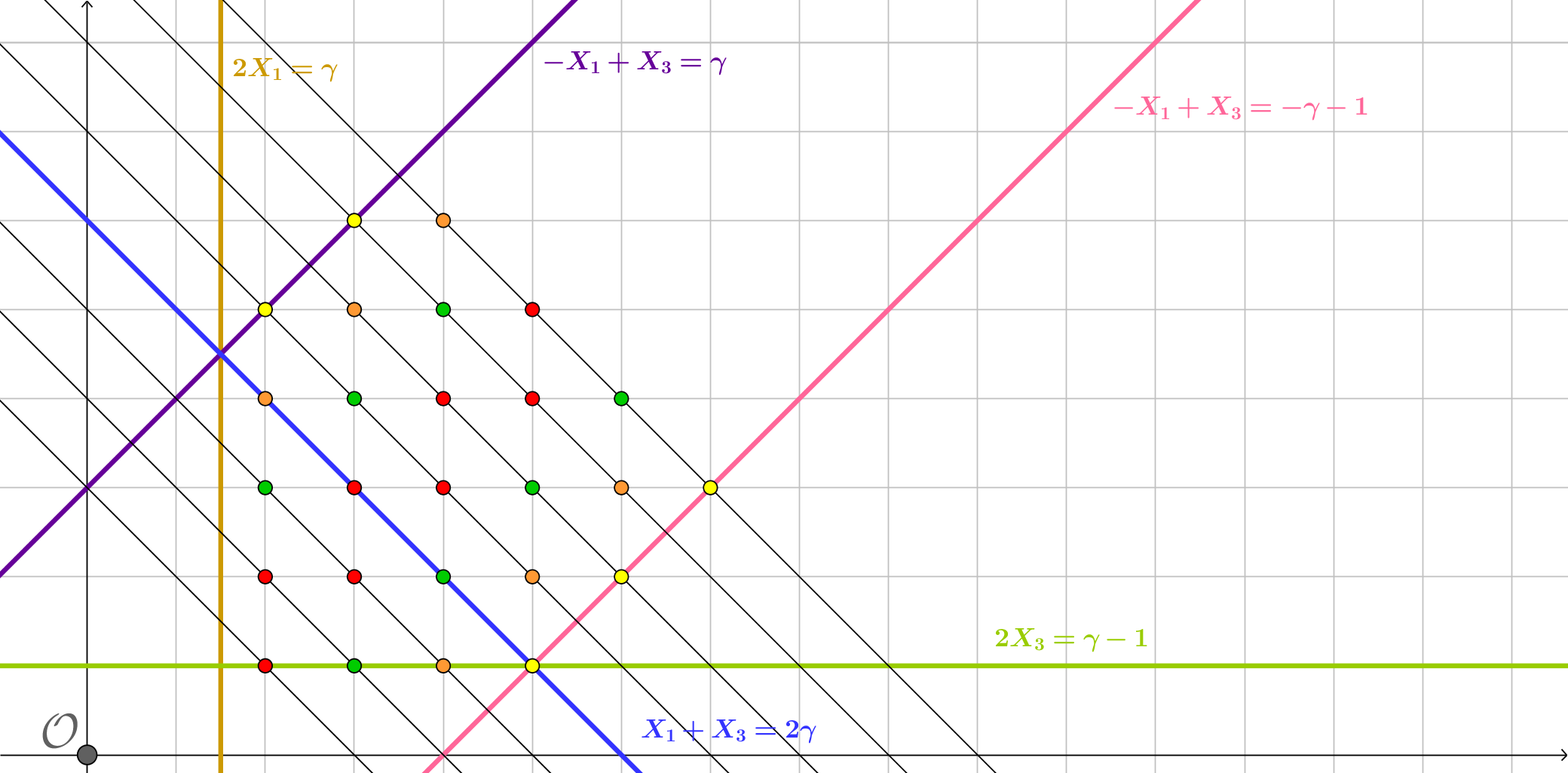}
\caption{For fixed $g$, the integer points in the line segment represent numerical semigroups of $\{S \in \mathcal{S}_\gamma(g): S/2 \text{ has multiplicity } 2\}$.}
\label{fig:2}
\end{figure}

\pagebreak

Let $N^m_\gamma(g) = \sum_{T \in \mathcal{S}_\gamma^m} \#\mathbf{x}_g^{-1}(T)$. After some computations, we obtain

$$N^2_\gamma(g)=
\begin{cases}
0, \text{ if } g < 2\gamma \\
k+1, \text{ if } g = 2\gamma + k \text{ and } k \in \{0, 1, \ldots, \gamma-1\} \\
\gamma+1, \text{ if } g \geq 3\gamma.
\end{cases}$$

In particular, $N^2_\gamma(g) \leq N^2_\gamma(g+1)$. We leave the following open question.

\begin{equation}
\text{Let } \gamma \in \mathbb{N} \text{ and } m \in [2,\gamma+1] \cap \mathbb{Z}. \text{ Is it true that } N^m_\gamma(g) \leq N^m_\gamma(g+1), \text{ for all } g?
\label{new}
\end{equation}

A positive answer to Question (\ref{new}) implies a positive answer to Question (\ref{weak}).

{\bf Acknowledgment.} 
The author was partially supported FAPDF-Brazil (grant 23072.91.49580.29052018). Part of this paper was presented in the  \lq\lq INdAM: International meeting on numerical semigroups" (2018) at Cortona, Italy. I am grateful to the referee for their comments, suggestions and corrections that allowed to improve this version of the paper.


\begin{thebibliography}{99}

\bibitem{MF} M. Bernardini and F. Torres, {\em Counting numerical semigroups by genus and even gaps}, Disc. Math., {\em 340} (2017), 2853--2863.

\bibitem{BP} V. Blanco and J. Puerto, {\em An application of integer programming to the decomposition of numerical semigroups}, SIAM J. Discrete Math., {\em 26(3)} (2012), 1210-–1237. 

\bibitem{Amoros1} M. Bras-Amor\'os, {\em Fibonacci-like behavior of 
the number of numerical semigroups of a given genus}, Semigroup 
Forum {\bf 76} (2008), 379--384.

\bibitem{FH} J. Fromentin and F. Hivert, {\em Exploring the tree of numerical semigroups}, 
Math. Comp. {\bf 85}(301) (2016), 2553--2568.

\bibitem{GS-R} P.A. Garc\'{\i }a-S\'anchez and J.C. Rosales, \lq\lq 
Numerical semigroups", Developments in Mathematics vol. {\bf 20}, 
Springer, New York, 2009.

\bibitem{Kaplan2} N. Kaplan, {\em Counting numerical semigroups}, Amer. Math. Monthly {\bf 163} (2017), 375--384.

\bibitem{RA} J.L. Ram\'{\i }rez-Alfons\'{\i }n, \lq\lq The Diophantine Frobenius Problem", Oxford Univ. Press vol {\bf 30}, 2005.

\bibitem{R-GS-GG-B} J.C. Rosales, P.A. Garc\'{i}a-S\'{a}nchez, J.I. Garc\'{i}a-Garc\'{i}a and M.B. Branco, {\em Systems of inequalities and numerical semigroups}, J. London Math. Soc. (2), {\em 65} (2002), 611--623.

\bibitem{Rosales} J.C. Rosales, P.A. Garc\'{\i }a-S\'anchez,  J.I. Garc\'{\i }a-S\'anchez, J.M. Urbano-Blanco, {\em Proportionally modular Diophantine inequalities}, J. Number Theory {\bf 103} (2003), 281--294.

\bibitem{Sloane} N.J.A. Sloane, \lq\lq The On-Line Encyclopedia of Integer Sequences", A007323, \\http://www.research.att.com/$\sim$njas/sequences/(2009)

\bibitem{Torres} F. Torres, {\em On $\gamma$-hyperelliptic 
numerical semigroups}, Semigroup Forum {\bf 55} (1997), 364--379.

\bibitem{Zhai} A. Zhai, {\em Fibonacci-like growth of numerical 
semigroups of a given genus}, Semigroup Forum {\bf 86} (2013), 
634--662.

\end{thebibliography}
\end{document}